\newcommand{\N}{{\mathbb N}}
\newcommand{\Z}{{\mathbb Z}}
\theoremstyle{plain}
\newtheorem{theorem}{Theorem}[section]
\newtheorem{corollary}[theorem]{Corollary}
\theoremstyle{remark}
\theoremstyle{definition}
\newtheorem{definition}[theorem]{Definition}
\title{On $k$-Fibonacci sums by matrix methods}
\author{\scriptsize Gamaliel Cerda}
\date{}
\begin{document}
\maketitle

\vspace{-20pt}

\medskip
\noindent

\begin{abstract}
In this paper, some $k$-Fibonacci and $k$-Lucas with arithmetic indexes sums are derived by using
the matrices $R_{a}=\left[ \begin{array}{lr} L_{k,a}  & -(-1)^{a} \\ 1 &  0  \end{array}\right]$ and $S_{a}=\frac{1}{2}\left[ \begin{array}{lr} L_{k,a} & \Delta_{a} \\ 1 &  L_{k,a}  \end{array}\right]$, where $\Delta_{a}=L_{k,a}^{2}-4(-1)^{a}$.

The most notable side of this paper is our proof method, since all the identities used in the proofs of main theorems are proved previously by using the matrices 
$R_{a}$ and $S_{a}$, with $a\in \N$. Although the identities we proved are known, our proofs are not encountered in the $k$-Fibonacci and $k$-Lucas numbers literature.
\end{abstract}

\medskip
\noindent
\subjclass{\footnotesize {\bf Mathematical subject classification:} 
Primary: 11B37, 11B39; Secondary: 40C05.}

\noindent
\keywords{\footnotesize {\bf Key words:} $k$-Fibonacci numbers, $k$-Lucas numbers, Matrix methods.}
\medskip

\section{Introduction}\label{sec:1}
One of the more studied sequences is the Fibonacci sequence \cite{1}, and it has been generalized in many ways \cite{2,3}. Here, we use the following one-parameter generalization of the Fibonacci sequence.
\begin{definition}\label{def:11}
For any integer number $k\geq1$, the $k$-th Fibonacci sequence, say $\{F_{k,n}\}_{n\in \N}$ is defined recurrently by
\begin{equation}
F_{k,n+1}=kF_{k,n}+F_{k,n-1},\ n\geq1,
\end{equation}
where $F_{k,0}=0$ and $F_{k,1}=1$.
\end{definition}

Note that for $k=1$ the classical Fibonacci sequence is obtained while for $k=2$ we obtain the Pell sequence. Some of the properties that the $k$-Fibonacci numbers verify and that we will need later are summarized below \cite{4}:

[Binet's formula] $F_{k,n}=\frac{\sigma_{1}^{n}-\sigma_{2}^{n}}{\sigma_{1}-\sigma_{2}}$, where $\sigma_{1}=\frac{k+\sqrt{k^{2}+4}}{2}$ and $\sigma_{2}=\frac{k-\sqrt{k^{2}+4}}{2}$. These roots verify $\sigma_{1}+\sigma_{2}=k$ and $\sigma_{1}\sigma_{2}=-1$.

This paper presents an interesting investigation about some special relations between matrices and $k$-Fibonacci and $k$-Lucas numbers. This investigation is valuable, since it provides students to use their theoretical knowledge to obtain new $k$-Fibonacci and $k$-Lucas identities with arithmetic indexes by different methods. 

We focus here on the subsequences of $k$-Fibonacci numbers with indexes in an arithmetic sequence, say $an+r$ for fixed integers $a$, $r$ with $0\leq r \leq a-1$. Several formulas for the sums of such numbers are deduced by matrix methods.

\section{Main theorems}
Let us denote $F_{k,n+1}+F_{k,n-1}$ by $L_{k,n}$ (the $k$-Lucas numbers).

\begin{theorem}
Let $a$ be a fixed positive integer. If $T$ is a square matrix with $T^{2}=L_{k,a}T-(-1)^{a}I$ and $I$ the matrix identity of order 2. Then, 
\begin{equation}\label{e1}
T^{n}=\frac{1}{F_{k,a}}\left(F_{k,an}T-(-1)^{a}F_{k,a(n-1)}I\right),
\end{equation}
for all $n\in \Z$.
\end{theorem}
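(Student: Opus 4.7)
My plan is to observe that both sides of \eqref{e1}, viewed as functions of $n\in\Z$, satisfy the same second-order linear recurrence, and then match initial conditions at two consecutive integers.

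First I would establish the scalar identity
\begin{equation*}
F_{k,a(n+1)} \;=\; L_{k,a}\,F_{k,an}\;-\;(-1)^{a}\,F_{k,a(n-1)}, \qquad n\in\Z.
\end{equation*}
Since $\sigma_{1}^{a}+\sigma_{2}^{a}=L_{k,a}$ and $\sigma_{1}^{a}\sigma_{2}^{a}=(\sigma_{1}\sigma_{2})^{a}=(-1)^{a}$, both $\sigma_{1}^{a}$ and $\sigma_{2}^{a}$ are roots of $x^{2}-L_{k,a}x+(-1)^{a}=0$; multiplying each root by $\sigma_{i}^{a(n-1)}$, subtracting, and dividing by $\sigma_{1}-\sigma_{2}$ gives the identity via Binet. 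I would also record the convention $F_{k,-a}=(-1)^{a+1}F_{k,a}$, which follows immediately from Binet together with $\sigma_{1}\sigma_{2}=-1$ (so that $\sigma_{1}^{-1}=-\sigma_{2}$); this is needed at $n=0$.

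Next I would note that $T$ is invertible: the hypothesis gives $T(T-L_{k,a}I)=-(-1)^{a}I$, hence $T^{-1}=(-1)^{a+1}(T-L_{k,a}I)$. So $n\mapsto T^{n}$ is defined on all of $\Z$, and multiplying $T^{2}=L_{k,a}T-(-1)^{a}I$ by $T^{n-1}$ shows that $T^{n+1}=L_{k,a}T^{n}-(-1)^{a}T^{n-1}$ for every $n\in\Z$. Writing
\begin{equation*}
A_{n}\;:=\;\tfrac{1}{F_{k,a}}\bigl(F_{k,an}T-(-1)^{a}F_{k,a(n-1)}I\bigr),
\end{equation*}
the scalar identity above says precisely that $A_{n}$ satisfies the same two-term recurrence. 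The initial conditions also match: $A_{1}=T$ using $F_{k,0}=0$, and $A_{0}=I$ using $F_{k,-a}=(-1)^{a+1}F_{k,a}$. Because a second-order linear recurrence on $\Z$ with invertible leading coefficients is determined by its values at two consecutive points, we conclude $T^{n}=A_{n}$ for all $n\in\Z$.

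The main obstacle is really the scalar identity and the convention for $F_{k,-a}$; once these are in hand, the matrix step is automatic, since the hypothesis $T^{2}=L_{k,a}T-(-1)^{a}I$ is exactly the matrix analogue of the scalar recurrence satisfied by $\{F_{k,an}\}$. A completely equivalent presentation would be a straightforward forward induction on $n\ge 0$ (computing $T^{n+1}=T\cdot T^{n}$ and eliminating $T^{2}$ via the hypothesis) and then downward induction using $T^{-1}$; the argument via matched recurrences simply packages these two inductions into one.
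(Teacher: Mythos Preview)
Your argument is correct. Both the paper and you verify the case $n=0$ using $F_{k,-a}=-(-1)^{a}F_{k,a}$ and handle $n\ge 1$ by induction (you phrase this as matching a second-order recurrence plus two initial values, which is the same thing). The genuine difference is in how negative $n$ are treated. The paper introduces the companion matrix $U=L_{k,a}I-T=(-1)^{a}T^{-1}$, checks that $U$ satisfies the \emph{same} quadratic relation $U^{2}=L_{k,a}U-(-1)^{a}I$, applies the already-proved positive-$n$ formula to $U$, and then translates $U^{n}=(-1)^{an}T^{-n}$ back. You instead observe that the coefficient $(-1)^{a}$ in the recurrence is a unit, so the recurrence can be run backward from $n=0,1$ to all negative $n$; uniqueness then forces $T^{n}=A_{n}$ globally. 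Your route is shorter and treats both directions uniformly; the paper's substitution has the compensating virtue of exhibiting an explicit symmetry ($T\leftrightarrow L_{k,a}I-T$) that explains, at the level of matrices, why negative indices behave compatibly with positive ones.
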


\begin{proof}
If $n=0$, the proof is obvious because $F_{k,-a}=-(-1)^{a}F_{k,a}$. It can be shown by induction that $F_{k,a}T^{n}=F_{k,an}T-(-1)^{a}F_{k,a(n-1)}I$, for every positive integer $n$. We now show that 
\begin{equation}
T^{-n}=\frac{1}{F_{k,a}}\left(F_{k,a(-n)}T-(-1)^{a}F_{k,a(-n-1)}I\right).
\end{equation}

Let $U=L_{k,a}I-T=(-1)^{a}T^{-1}$, then
\begin{align*}
U^{2}=(L_{k,a}I-T)^{2}&=L_{k,a}^{2}I-2L_{k,a}T+T^{2}\\
&=L_{k,a}(L_{k,a}I-T)-(-1)^{a}I=L_{k,a}U-(-1)^{a}I,
\end{align*}
this shows that $U^{n}=\frac{1}{F_{k,a}}\left(F_{k,an}U-(-1)^{a}F_{k,a(n-1)}I\right)$.\\

That is, $F_{k,a}((-1)^{a}T^{-1})^{n}=F_{k,an}(L_{k,a}I-T)-(-1)^{a}F_{a(n-1)}I$. Therefore 
\begin{align*}
(-1)^{an}(F_{k,a}T^{-n})&=-F_{k,an}T+(L_{k,a}F_{k,an}-(-1)^{a}F_{k,a(n-1)})I\\
&=-F_{k,an}T+F_{k,a(n+1)}I.
\end{align*}

Thus, 
\begin{equation}\label{e2}
T^{-n}=\frac{1}{F_{k,a}}\left(-(-1)^{-an}F_{k,an}T+(-1)^{-an}F_{k,a(n+1)}I\right).
\end{equation}

Thus, the proof is completed.
\end{proof}

Now, we define a $2\times 2$ matrix $R_{a}$ and then we give some new results for the $k$-Fibonacci numbers $F_{k,an}$ by matrix methods.

Define the $2\times 2$ matrix $R_{a}$ as follows:
\begin{equation}\label{e3}
R_{a}=\left[ \begin{array}{lr} L_{k,a}  & -(-1)^{a} \\ 1 &  0  \end{array}\right].
\end{equation}

By an inductive argument and using (\ref{e1}), we get
\begin{corollary}
For any integer $n\geq1$ holds:
\begin{equation}\label{e4}
R_{a}^{n}=\frac{1}{F_{k,a}}\left[ \begin{array}{lr} F_{k,a(n+1)}  & -(-1)^{a}F_{k,an} \\ F_{k,an}  &  -(-1)^{a}F_{k,a(n-1)}  \end{array}\right].
\end{equation}
\end{corollary}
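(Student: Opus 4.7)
The plan is to derive (\ref{e4}) as a direct specialization of Theorem 2.1 with $T=R_a$. The first step is to check that $R_a$ satisfies the hypothesis $R_a^{2} = L_{k,a}R_a - (-1)^{a}I$. Since $R_a$ is $2\times 2$ with $\operatorname{tr}(R_a)=L_{k,a}+0=L_{k,a}$ and $\det(R_a)=L_{k,a}\cdot 0-(-(-1)^{a})\cdot 1=(-1)^{a}$, the identity is immediate from the Cayley--Hamilton theorem; if one prefers to avoid invoking Cayley--Hamilton, a direct $2\times 2$ multiplication of $R_a$ by itself produces the same relation.

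Once the hypothesis is verified, (\ref{e1}) yields
\[
R_a^{n}=\frac{1}{F_{k,a}}\bigl(F_{k,an}\,R_a-(-1)^{a}F_{k,a(n-1)}I\bigr),
\]
and it remains only to compare the four entries of the right-hand side with those claimed in (\ref{e4}). Three of them fall out immediately from the definition (\ref{e3}): the $(1,2)$ entry is $-(-1)^{a}F_{k,an}/F_{k,a}$, the $(2,1)$ entry is $F_{k,an}/F_{k,a}$, and the $(2,2)$ entry is $-(-1)^{a}F_{k,a(n-1)}/F_{k,a}$, each matching the stated formula.

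The only entry requiring nontrivial manipulation is the $(1,1)$ position, where the right-hand side produces
\[
\frac{1}{F_{k,a}}\bigl(L_{k,a}F_{k,an}-(-1)^{a}F_{k,a(n-1)}\bigr),
\]
and one must recognize the numerator as $F_{k,a(n+1)}$. I expect this to be the main (and really the only) obstacle, though a mild one. The cleanest route is to apply Binet's formula from the introduction: writing $L_{k,a}=\sigma_{1}^{a}+\sigma_{2}^{a}$ and $F_{k,am}=(\sigma_{1}^{am}-\sigma_{2}^{am})/(\sigma_{1}-\sigma_{2})$, expanding $L_{k,a}F_{k,an}$ and simplifying the cross terms using $\sigma_{1}\sigma_{2}=-1$ produces the identity $L_{k,a}F_{k,an}=F_{k,a(n+1)}+(-1)^{a}F_{k,a(n-1)}$. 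Substituting this into the $(1,1)$ expression yields $F_{k,a(n+1)}/F_{k,a}$ and finishes the proof. If an inductive presentation is desired, one may instead check the $n=1$ base case (which reduces to $F_{k,2a}=L_{k,a}F_{k,a}$, the $n=1$ instance of the Binet identity) and use the same identity when computing $R_a^{n+1}=R_a^{n}\,R_a$ in the inductive step.
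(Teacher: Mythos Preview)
Your argument is correct and matches the paper's approach: the paper offers only the one-line justification ``By an inductive argument and using (\ref{e1}),'' and you have spelled out exactly those details---verifying that $R_a$ satisfies the hypothesis of Theorem~2.1 (via Cayley--Hamilton or directly), specializing (\ref{e1}), and identifying the $(1,1)$ entry through the identity $L_{k,a}F_{k,an}-(-1)^{a}F_{k,a(n-1)}=F_{k,a(n+1)}$, which the paper itself already uses inside the proof of Theorem~2.1.
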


Clearly the matrix $R_{a}^{n}$ satisfies the recurrence relation, for $n\geq1$
\begin{equation}\label{e5}
R_{a}^{n+1}=L_{k,a}R_{a}^{n}-(-1)^{a}R_{a}^{n-1},
\end{equation}
where $R_{a}^{0}=I$ and $R_{a}^{1}=R_{a}$.

We define $S_{a}$ be the $2\times 2$ matrix
\begin{equation}\label{e6}
S_{a}=\frac{1}{2}\left[ \begin{array}{lr} L_{k,a} & \Delta_{a} \\ 1 &  L_{k,a}  \end{array}\right],
\end{equation}
where $\Delta_{a}=L_{k,a}^{2}-4(-1)^{a}$. Then,
\begin{corollary}
For any integer $n\geq1$ holds:
\begin{equation}\label{e7}
S_{a}^{n}=\frac{1}{2F_{k,a}}\left[ \begin{array}{lr} \epsilon_{a}(n) & \Delta_{a} F_{k,an}\\ F_{k,an} & \epsilon_{a}(n) \end{array}\right].
\end{equation}
where $\epsilon_{a}(n)=2F_{k,a(n+1)}-L_{k,a}F_{k,an}$.
\end{corollary}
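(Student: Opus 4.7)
The plan is to observe that $S_{a}$ itself satisfies the hypothesis of Theorem 2.1, so the corollary reduces to a direct substitution together with one standard Binet identity. In this sense there is essentially no new inductive argument to run; the work of (\ref{e1}) has already been done.

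The first step is to verify that $S_{a}^{2}=L_{k,a}S_{a}-(-1)^{a}I$. The fastest route is via Cayley--Hamilton: the trace of $S_{a}$ is $\tfrac{1}{2}(L_{k,a}+L_{k,a})=L_{k,a}$, and its determinant is
\[
\det S_{a}=\tfrac{1}{4}\bigl(L_{k,a}^{2}-\Delta_{a}\bigr)=\tfrac{1}{4}\cdot 4(-1)^{a}=(-1)^{a},
\]
by the very definition of $\Delta_{a}$. Hence $S_{a}^{2}-L_{k,a}S_{a}+(-1)^{a}I=0$, so the hypothesis of Theorem 2.1 is met and formula (\ref{e1}) applies to $T=S_{a}$.

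The second step is to expand
\[
S_{a}^{n}=\frac{1}{F_{k,a}}\bigl(F_{k,an}\,S_{a}-(-1)^{a}F_{k,a(n-1)}\,I\bigr)
\]
entry by entry. The off-diagonals immediately read off as $\Delta_{a}F_{k,an}/(2F_{k,a})$ and $F_{k,an}/(2F_{k,a})$, matching (\ref{e7}). Both diagonal entries collapse to the same expression
\[
\frac{L_{k,a}F_{k,an}-2(-1)^{a}F_{k,a(n-1)}}{2F_{k,a}}.
\]

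The final step, which I expect to be the only one requiring any thought, is to reconcile this diagonal with $\epsilon_{a}(n)/(2F_{k,a})=\bigl(2F_{k,a(n+1)}-L_{k,a}F_{k,an}\bigr)/(2F_{k,a})$. A routine rearrangement shows that this equality amounts to the identity $L_{k,a}F_{k,an}=F_{k,a(n+1)}+(-1)^{a}F_{k,a(n-1)}$, which follows immediately from Binet's formula upon using $\sigma_{1}\sigma_{2}=-1$ (write $L_{k,a}=\sigma_{1}^{a}+\sigma_{2}^{a}$, multiply out, and peel off the cross terms). This Binet identity is the only nontrivial ingredient, but it is a two-line calculation; no serious obstacle is anticipated.
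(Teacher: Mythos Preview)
Your proof is correct, but it is not the approach the paper takes. The paper proves the corollary by a direct induction on $n$: it checks $n=1$, assumes the formula for $S_{a}^{n-1}$, multiplies by $S_{a}$, and verifies the entries, invoking the relation $2\epsilon_{a}(n)=\epsilon_{a}(n-1)L_{k,a}+\Delta_{a}F_{k,a(n-1)}$ for the diagonal. You instead feed $S_{a}$ into Theorem~2.1 via Cayley--Hamilton and read off the entries, which is arguably more in keeping with the paper's own machinery (and is exactly how the paper handles the companion Corollary for $R_{a}$). Amusingly, both routes bottom out in the same identity: the paper's unproved step $2\epsilon_{a}(n)=\epsilon_{a}(n-1)L_{k,a}+\Delta_{a}F_{k,a(n-1)}$ unwinds to precisely your $L_{k,a}F_{k,an}=F_{k,a(n+1)}+(-1)^{a}F_{k,a(n-1)}$, so neither argument is cheaper in substance. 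Your version has the advantage of making that dependence explicit and of actually reusing Theorem~2.1 rather than rerunning an induction from scratch.
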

\begin{proof}
(By induction). For $n=1$:
\begin{equation}\label{e8}
S_{a}^{1}=\frac{1}{2}\left[ \begin{array}{lr} L_{k,a} & \Delta_{a} \\ 1 &  L_{k,a}  \end{array}\right]=\frac{1}{2F_{k,a}}\left[ \begin{array}{lr} F_{k,a}L_{k,a} & \Delta_{a} F_{k,a}\\ F_{k,a} & F_{k,a}L_{k,a} \end{array}\right]
\end{equation}
since $\epsilon_{a}(1)=F_{k,2a}=L_{k,a}F_{k,a}$. Let us suppose that the formula is true for $n-1$:
\begin{equation}\label{e9}
S_{a}^{n-1}=\frac{1}{2F_{k,a}}\left[ \begin{array}{lr} \epsilon_{a}(n-1) & \Delta_{a} F_{k,a(n-1)}\\ F_{k,a(n-1)} & \epsilon_{a}(n-1)\end{array}\right],
\end{equation}
with $\epsilon_{a}(n-1)=2F_{k,an}-L_{k,a}F_{k,a(n-1)}$.

Then,
\begin{align*}
S_{a}^{n}&=S_{a}^{n-1}S_{a}^{1}=\frac{1}{4F_{k,a}}\left[ \begin{array}{lr} \epsilon_{a}(n-1) & \Delta_{a} F_{k,a(n-1)}\\ F_{k,a(n-1)} & \epsilon_{a}(n-1) \end{array}\right]\left[ \begin{array}{lr} L_{k,a} & \Delta_{a} \\ 1 &  L_{k,a}  \end{array}\right]\\
&=\frac{1}{4F_{k,a}}\left[ \begin{array}{lr} \epsilon_{a}(n-1)L_{k,a}+\Delta_{a}F_{k,a(n-1)} & \Delta_{a}(\epsilon_{a}(n-1)+L_{k,a}F_{k,a(n-1)}) \\ \epsilon_{a}(n-1)+L_{k,a}F_{k,a(n-1)}  &  \epsilon_{a}(n-1)L_{k,a}+\Delta_{a}F_{k,a(n-1)}  \end{array}\right]\\
&=\frac{1}{2F_{k,a}}\left[ \begin{array}{lr} \epsilon_{a}(n) & \Delta_{a} F_{k,an}\\ F_{k,an} & \epsilon_{a}(n) \end{array}\right],
\end{align*}
since $2\epsilon_{a}(n)=\epsilon_{a}(n-1)L_{k,a}+\Delta_{a}F_{k,a(n-1)}$. Thus, the proof is completed.
\end{proof}

An important property of these numbers can be tested using the above result.
\begin{theorem}
For any integer $n\geq1$ holds:
\begin{equation}\label{e10}
F_{k,a(n+1)}^{2}-L_{k,a}F_{k,an}F_{k,a(n+1)}+(-1)^{a}F_{k,an}^{2}=F_{k,a}^{2}(-1)^{an}.
\end{equation}
\end{theorem}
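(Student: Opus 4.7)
The plan is to read the identity off as a Cassini-type relation hidden in $\det(R_{a}^{n})$, and then to massage the stated left-hand side into that Cassini form using the recurrence (\ref{e5}). The two-by-two matrix machinery already developed does essentially all the work.

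First, I would compute $\det R_{a} = L_{k,a}\cdot 0 - 1\cdot(-(-1)^{a}) = (-1)^{a}$, so that $\det(R_{a}^{n}) = (-1)^{an}$. Computing the same determinant from the explicit expression (\ref{e4}) and pulling $(-1)^{a}/F_{k,a}^{2}$ out front gives the Cassini-like identity
\[
F_{k,an}^{2} - F_{k,a(n+1)}F_{k,a(n-1)} \;=\; (-1)^{a(n-1)}\,F_{k,a}^{2}.
\]
This is a single $2\times 2$ determinant calculation.

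Second, I would convert the left-hand side of (\ref{e10}) into this shape. Reading (\ref{e5}) off any entry via (\ref{e4}) (for example the $(2,1)$ entry) yields the scalar recurrence $F_{k,a(n+1)} = L_{k,a}F_{k,an} - (-1)^{a}F_{k,a(n-1)}$, equivalently $F_{k,a(n+1)} - L_{k,a}F_{k,an} = -(-1)^{a}F_{k,a(n-1)}$. Factoring the first two terms of the LHS of (\ref{e10}) and substituting this gives
\[
F_{k,a(n+1)}\bigl(F_{k,a(n+1)} - L_{k,a}F_{k,an}\bigr) + (-1)^{a}F_{k,an}^{2} \;=\; (-1)^{a}\bigl[F_{k,an}^{2} - F_{k,a(n+1)}F_{k,a(n-1)}\bigr].
\]
Plugging in the Cassini identity from the first step produces $(-1)^{a}\cdot(-1)^{a(n-1)}F_{k,a}^{2} = (-1)^{an}F_{k,a}^{2}$, which is the desired right-hand side.

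The only real hazard is sign bookkeeping with the various powers of $(-1)$ (in particular with $(-1)^{an-a} = (-1)^{a(n-1)}$); once $\det R_{a}$ is computed correctly and the scalar recurrence is used once, the identity falls out in two lines, with no further obstacle.
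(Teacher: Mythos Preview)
Your argument is correct, but it is not the route the paper takes. The paper proves (\ref{e10}) via the matrix $S_{a}$: since $\det S_{a}=(-1)^{a}$, one has $\det(S_{a}^{n})=(-1)^{an}$, and computing the same determinant from (\ref{e7}) gives $\epsilon_{a}(n)^{2}-\Delta_{a}F_{k,an}^{2}=4F_{k,a}^{2}(-1)^{an}$; expanding $\epsilon_{a}(n)=2F_{k,a(n+1)}-L_{k,a}F_{k,an}$ and $\Delta_{a}=L_{k,a}^{2}-4(-1)^{a}$ yields (\ref{e10}) directly, with no recurrence substitution. You instead use the companion matrix $R_{a}$: its determinant gives the Cassini form $F_{k,an}^{2}-F_{k,a(n+1)}F_{k,a(n-1)}=(-1)^{a(n-1)}F_{k,a}^{2}$, and you then need one application of the scalar recurrence $F_{k,a(n+1)}=L_{k,a}F_{k,an}-(-1)^{a}F_{k,a(n-1)}$ to eliminate $F_{k,a(n-1)}$ and recover the stated quadratic in $F_{k,a(n+1)}$ and $F_{k,an}$. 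The trade-off is that the $S_{a}$ route lands on (\ref{e10}) in one shot but requires unpacking $\epsilon_{a}(n)$ and $\Delta_{a}$, whereas your $R_{a}$ route keeps the entries simple but costs an extra recurrence step; both are legitimate $2\times2$ determinant proofs and neither is materially harder than the other.
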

\begin{proof}
Since $\det(S_{a})=(-1)^{a}$, $\det(S_{a}^{n})=(\det(S_{a}))^{n}=(-1)^{an}$. Moreover, since (\ref{e7}), we get $\det(S_{a}^{n})=\frac{1}{4F_{k,a}^{2}}(\epsilon_{a}(n)^{2}-\Delta_{a}F_{k,an}^{2})$. Furthermore, $$\epsilon_{a}(n)^{2}-\Delta_{a}F_{k,an}^{2}=4(F_{k,a(n+1)}^{2}-L_{k,a}F_{k,an}F_{k,a(n+1)}+(-1)^{a}F_{k,an}^{2}).$$

The proof is completed.
\end{proof}
Let us give a different proof of one of the fundamental identities of $k$-Fibonacci
and $k$-Lucas numbers, by using the matrix $S_{a}$.

\begin{theorem}
For all $n,m\in \N$,
\begin{equation}\label{e11}
F_{k,a}F_{k,a(n+m)}=F_{k,a(n+1)}F_{k,am}+F_{k,a(m+1)}F_{k,an}-L_{k,a}F_{k,an}F_{k,am}.
\end{equation}
\end{theorem}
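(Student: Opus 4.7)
The plan is to exploit the semigroup property $S_a^{n+m}=S_a^n\,S_a^m$, using the explicit form of $S_a^n$ from the preceding corollary. Since both sides are known matrices, comparing a single entry will yield a scalar identity which, after unpacking the definition of $\epsilon_a$, is exactly \eqref{e11}.

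Concretely, first I would write down both sides. On the left,
$$S_a^{n+m}=\frac{1}{2F_{k,a}}\begin{bmatrix} \epsilon_a(n+m) & \Delta_a F_{k,a(n+m)} \\ F_{k,a(n+m)} & \epsilon_a(n+m) \end{bmatrix},$$
so the $(2,1)$-entry is $F_{k,a(n+m)}/(2F_{k,a})$. On the right,
$$S_a^n\,S_a^m=\frac{1}{4F_{k,a}^{2}}\begin{bmatrix} \epsilon_a(n) & \Delta_a F_{k,an} \\ F_{k,an} & \epsilon_a(n) \end{bmatrix}\begin{bmatrix} \epsilon_a(m) & \Delta_a F_{k,am} \\ F_{k,am} & \epsilon_a(m) \end{bmatrix},$$
whose $(2,1)$-entry is $\bigl(F_{k,an}\,\epsilon_a(m)+\epsilon_a(n)\,F_{k,am}\bigr)/(4F_{k,a}^{2})$.

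Equating these entries gives
$$2F_{k,a}F_{k,a(n+m)}=F_{k,an}\,\epsilon_a(m)+\epsilon_a(n)\,F_{k,am}.$$
At this point I would substitute the definition $\epsilon_a(j)=2F_{k,a(j+1)}-L_{k,a}F_{k,aj}$ on the right-hand side; the two contributions $-L_{k,a}F_{k,an}F_{k,am}$ merge into $-2L_{k,a}F_{k,an}F_{k,am}$, and after dividing by $2$ the claimed identity \eqref{e11} drops out.

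The only mild obstacle is a bookkeeping one: one must pick the right entry (the off-diagonal, not the diagonal, is cleaner because it avoids the extra $\Delta_a$ factor), and one must be careful that the $1/(2F_{k,a})$ normalizations combine into $1/(4F_{k,a}^{2})$ before matching, so that the factor of $F_{k,a}$ on the left of \eqref{e11} appears correctly. No induction on $n$ or $m$ is needed because the closed form of $S_a^n$ from the previous corollary already encodes the whole recursion.
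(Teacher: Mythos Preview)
Your proposal is correct and follows essentially the same route as the paper: both exploit $S_a^{n+m}=S_a^n S_a^m$, compare the $(2,1)$-entry using the closed form of $S_a^n$, obtain $2F_{k,a}F_{k,a(n+m)}=\epsilon_a(m)F_{k,an}+\epsilon_a(n)F_{k,am}$, and then expand $\epsilon_a$ and divide by $2$. Your remarks about the normalization factors and the choice of entry match exactly what the paper does implicitly.
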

\begin{proof}
Since $S_{a}^{n+m}=S_{a}^{n}S_{a}^{m}$, then
$$
\left[ \begin{array}{lr} \epsilon_{a}(n+m) & \Delta_{a} F_{k,a(n+m)}\\ F_{k,a(n+m)} & \epsilon_{a}(n+m) \end{array}\right]=\frac{1}{2F_{k,a}}\left[ \begin{array}{lr} \epsilon_{a}(n) & \Delta_{a} F_{k,an}\\ F_{k,an} & \epsilon_{a}(n) \end{array}\right]\left[ \begin{array}{lr} \epsilon_{a}(m) & \Delta_{a} F_{k,am}\\ F_{k,am} & \epsilon_{a}(m) \end{array}\right],
$$
where $\epsilon_{a}(n)=2F_{k,a(n+1)}-L_{k,a}F_{k,an}$. It is seen that,
\begin{equation}
2F_{k,a}S_{a}^{n+m}=\left[ \begin{array}{lr} \epsilon_{a}(n)\epsilon_{a}(m)+\Delta_{a}F_{k,an}F_{k,am} & \Delta_{a}(\epsilon_{a}(m)F_{k,an}+\epsilon_{a}(n)F_{k,am})\\ \epsilon_{a}(m)F_{k,an}+\epsilon_{a}(n)F_{k,am} & \epsilon_{a}(n)\epsilon_{a}(m)+\Delta_{a}F_{k,an}F_{k,am} \end{array}\right].
\end{equation}

Thus it follows that,
\begin{equation}
2F_{k,a}F_{k,a(n+m)}=\epsilon_{a}(m)F_{k,an}+\epsilon_{a}(n)F_{k,am},
\end{equation}
and
$$\epsilon_{a}(m)F_{k,an}+\epsilon_{a}(n)F_{k,am}=2(F_{k,a(n+1)}F_{k,am}+F_{k,a(m+1)}F_{k,an}-L_{k,a}F_{k,an}F_{k,am}).$$

Then, the proof is completed.
\end{proof}

In the particular case, if $a=1$, we obtain 
\begin{corollary}
For all $n,m\in \N$,
\begin{equation}\label{e12}
F_{k,n+m}=F_{k,m+1}F_{k,n}+F_{k,m}F_{k,n-1}.
\end{equation}
\end{corollary}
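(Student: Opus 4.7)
The plan is to specialize the identity (\ref{e11}) of the previous theorem to $a=1$, and then absorb the $-L_{k,1}F_{k,n}F_{k,m}$ term into one of the other products by applying the defining recurrence. Since the corollary is offered explicitly as the case $a=1$ of the preceding theorem, no new matrix computation is needed.

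First, I would substitute $a=1$ into (\ref{e11}). The left-hand side coefficient becomes $F_{k,1}=1$, so we obtain
\begin{equation*}
F_{k,n+m}=F_{k,n+1}F_{k,m}+F_{k,m+1}F_{k,n}-L_{k,1}F_{k,n}F_{k,m}.
\end{equation*}
Next, I would compute $L_{k,1}$ directly from the definition $L_{k,n}=F_{k,n+1}+F_{k,n-1}$: this gives $L_{k,1}=F_{k,2}+F_{k,0}=k+0=k$, so the identity reads
\begin{equation*}
F_{k,n+m}=F_{k,n+1}F_{k,m}+F_{k,m+1}F_{k,n}-kF_{k,n}F_{k,m}.
\end{equation*}

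Then I would group the first and third terms and apply the defining recurrence of Definition \ref{def:11} in the form $F_{k,n+1}-kF_{k,n}=F_{k,n-1}$, yielding
\begin{equation*}
F_{k,n+1}F_{k,m}-kF_{k,n}F_{k,m}=F_{k,m}\bigl(F_{k,n+1}-kF_{k,n}\bigr)=F_{k,m}F_{k,n-1}.
\end{equation*}
Substituting this back gives exactly (\ref{e12}). The only subtle point is the bookkeeping with the Lucas term: one could alternatively group the second and third terms using $F_{k,m+1}-kF_{k,m}=F_{k,m-1}$, which produces the mirror identity $F_{k,n+m}=F_{k,n+1}F_{k,m}+F_{k,n}F_{k,m-1}$; both forms are equivalent, but the statement to be proved fixes the grouping. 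There is no real obstacle, since everything reduces to evaluating $L_{k,1}$ and a single application of the three-term recurrence.
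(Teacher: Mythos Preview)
Your proposal is correct and follows exactly the route the paper indicates: the paper presents this corollary with no proof beyond the phrase ``In the particular case, if $a=1$, we obtain,'' and your argument simply fills in the routine details of that specialization (evaluating $L_{k,1}=k$ and applying the recurrence $F_{k,n+1}-kF_{k,n}=F_{k,n-1}$).
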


\begin{theorem}
For all $n,m\in \N$,
\begin{equation}\label{e13}
(-1)^{am}F_{k,a}F_{k,a(n-m)}=F_{k,a(m+1)}F_{k,an}-F_{k,a(n+1)}F_{k,am}.
\end{equation}
\end{theorem}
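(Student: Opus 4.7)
The plan is to mirror the matrix-multiplication approach used to prove (\ref{e11}), but with $S_{a}^{n-m}=S_{a}^{n}(S_{a}^{m})^{-1}$ in place of $S_{a}^{n+m}=S_{a}^{n}S_{a}^{m}$, so that the minus sign in $F_{k,a(n-m)}$ is produced naturally by inverting the second factor.

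First, since $\det(S_{a})=(-1)^{a}$ (as used in the proof of (\ref{e10})), one has $\det(S_{a}^{m})=(-1)^{am}$, and the $2\times 2$ adjugate formula applied to (\ref{e7}) yields
\[
(S_{a}^{m})^{-1}=\frac{1}{2F_{k,a}(-1)^{am}}\left[\begin{array}{lr}\epsilon_{a}(m) & -\Delta_{a}F_{k,am}\\ -F_{k,am} & \epsilon_{a}(m)\end{array}\right].
\]
Next I would multiply $S_{a}^{n}$ (from (\ref{e7})) by this inverse and compare the result with $S_{a}^{n-m}$, also written via (\ref{e7}). Extracting the $(2,1)$-entries on both sides gives the scalar identity
\[
2F_{k,a}(-1)^{am}F_{k,a(n-m)}=F_{k,an}\epsilon_{a}(m)-F_{k,am}\epsilon_{a}(n).
\]
Finally, substituting $\epsilon_{a}(j)=2F_{k,a(j+1)}-L_{k,a}F_{k,aj}$ on the right-hand side, the two $L_{k,a}F_{k,an}F_{k,am}$ contributions cancel, and dividing by $2$ produces exactly (\ref{e13}).

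The only delicate step is sign bookkeeping: the factor $(-1)^{am}$ coming out of $\det(S_{a}^{m})$ must be carried correctly through the adjugate and across the matrix product, and one has to confirm that the $L_{k,a}$ cross-terms from the expansions of $\epsilon_{a}(m)$ and $\epsilon_{a}(n)$ really do cancel (not reinforce). These checks are routine but error-prone; conceptually the argument is the same $2\times 2$ matrix computation as for (\ref{e11}), with inversion replacing multiplication.
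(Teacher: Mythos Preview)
Your proposal is correct and follows essentially the same route as the paper: the paper also computes $S_{a}^{n-m}=S_{a}^{n}(S_{a}^{m})^{-1}$, writes the inverse via the adjugate and $\det(S_{a}^{m})=(-1)^{am}$, reads off the $(2,1)$-entry to obtain $2(-1)^{am}F_{k,a}F_{k,a(n-m)}=\epsilon_{a}(m)F_{k,an}-\epsilon_{a}(n)F_{k,am}$, and then expands $\epsilon_{a}$ so that the $L_{k,a}F_{k,an}F_{k,am}$ terms cancel.
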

\begin{proof}
Since $S_{a}^{n-m}=S_{a}^{n}(S_{a}^{m})^{-1}$, then
$$
\left[ \begin{array}{lr} \epsilon_{a}(n-m) & \Delta_{a} F_{k,a(n-m)}\\ F_{k,a(n-m)} & \epsilon_{a}(n-m) \end{array}\right]=\frac{(-1)^{am}}{2F_{k,a}}\left[ \begin{array}{lr} \epsilon_{a}(n) & \Delta_{a} F_{k,an}\\ F_{k,an} & \epsilon_{a}(n) \end{array}\right]\left[ \begin{array}{lr} \epsilon_{a}(m) & -\Delta_{a} F_{k,am}\\ -F_{k,am} & \epsilon_{a}(m) \end{array}\right],
$$
where $\epsilon_{a}(n)=2F_{k,a(n+1)}-L_{k,a}F_{k,an}$. It is seen that,
\begin{equation}
2(-1)^{am}F_{k,a}S_{a}^{n-m}=\left[ \begin{array}{lr} \epsilon_{a}(n)\epsilon_{a}(m)-\Delta_{a}F_{k,an}F_{k,am} & \Delta_{a}(\epsilon_{a}(m)F_{k,an}-\epsilon_{a}(n)F_{k,am})\\ \epsilon_{a}(m)F_{k,an}-\epsilon_{a}(n)F_{k,am} & \epsilon_{a}(n)\epsilon_{a}(m)-\Delta_{a}F_{k,an}F_{k,am} \end{array}\right].
\end{equation}

Thus it follows that,
\begin{equation}
2(-1)^{am}F_{k,a}F_{k,a(n-m)}=\epsilon_{a}(m)F_{k,an}-\epsilon_{a}(n)F_{k,am},
\end{equation}
and
$$\epsilon_{a}(m)F_{k,an}-\epsilon_{a}(n)F_{k,am}=2(F_{k,a(m+1)}F_{k,an}-F_{k,a(n+1)}F_{k,am}).$$

Then, the proof is completed.
\end{proof}

In the particular case, if $a=1$, we obtain 
\begin{corollary}
For all $n,m\in \N$,
\begin{equation}\label{e14}
(-1)^{m}F_{k,n-m}=F_{k,m+1}F_{k,n}-F_{k,n+1}F_{k,m}.
\end{equation}
\end{corollary}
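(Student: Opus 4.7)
The plan is to derive this corollary as a direct specialization of the preceding theorem, which was already proved via the matrix $S_a$. Specifically, I would set $a=1$ in identity (\ref{e13}), which reads
$$(-1)^{am}F_{k,a}F_{k,a(n-m)}=F_{k,a(m+1)}F_{k,an}-F_{k,a(n+1)}F_{k,am}.$$
With $a=1$ all the arithmetic-index factors collapse to ordinary indices: $F_{k,a(n-m)}=F_{k,n-m}$, $F_{k,a(m+1)}=F_{k,m+1}$, $F_{k,an}=F_{k,n}$, $F_{k,a(n+1)}=F_{k,n+1}$, and $F_{k,am}=F_{k,m}$.

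Next I would invoke the initial condition from Definition \ref{def:11}, namely $F_{k,1}=1$, so that the factor $F_{k,a}=F_{k,1}$ on the left disappears. The exponent $(-1)^{am}$ likewise becomes $(-1)^{m}$. Substituting, the identity becomes exactly
$$(-1)^{m}F_{k,n-m}=F_{k,m+1}F_{k,n}-F_{k,n+1}F_{k,m},$$
which is (\ref{e14}).

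There is no real obstacle here; the only thing one should verify is that the hypothesis $n,m\in\N$ in the theorem is preserved under the specialization $a=1$, which is immediate. One might optionally remark that the sign $(-1)^{am}=(-1)^{m}$ shows that this corollary is the classical d'Ocagne-type identity for the $k$-Fibonacci sequence, obtained here as a byproduct of the $S_a$-matrix approach.
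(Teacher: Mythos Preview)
Your proposal is correct and matches the paper's approach exactly: the paper obtains this corollary simply by specializing the preceding theorem (identity (\ref{e13})) to $a=1$, using $F_{k,1}=1$, with no additional argument.
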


\section{Sum of $k$-Fibonacci numbers of kind $an$}
In this section, we study the sum of the $k$-Fibonacci numbers of kind $an$, with $a$ an positive integer number.
\begin{theorem}
Let $n\in \N$ and $a\in \Z$ with $a\geq 1$. Then,
\begin{equation}
\sum_{i=0}^{n}F_{k,ai}=\frac{(-1)^{a}F_{k,an}+F_{k,a}-F_{k,a(n+1)}}{1+(-1)^{a}-L_{k,a}}.
\end{equation}
\end{theorem}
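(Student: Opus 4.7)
The plan is to exploit the matrix $R_{a}$ from equation~(\ref{e3}) in the same spirit as Theorems~2.2--2.5, using the fact (from (\ref{e4})) that the $(2,1)$-entry of $R_{a}^{i}$ is $F_{k,ai}/F_{k,a}$. Summing the matrix geometric series $\sum_{i=0}^{n}R_{a}^{i}$ and reading off the $(2,1)$-entry should yield the desired closed form for $\sum_{i=0}^{n}F_{k,ai}$ for free.

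First I would record the telescoping identity
\[
(I-R_{a})\sum_{i=0}^{n}R_{a}^{i}=I-R_{a}^{n+1},
\]
valid for any square matrix. Then I would compute $\det(I-R_{a})=1+(-1)^{a}-L_{k,a}$ directly from~(\ref{e3}) and check that this quantity is nonzero for every $k\geq 1$, $a\geq 1$: for $a$ odd it equals $-L_{k,a}<0$, while for $a$ even it equals $2-L_{k,a}$, which is strictly negative because $L_{k,a}=\sigma_{1}^{a}+\sigma_{1}^{-a}>2$. Hence $I-R_{a}$ is invertible and its determinant is exactly the denominator appearing in the theorem, which is already a strong hint that the approach is correct.

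Next I would write $(I-R_{a})^{-1}$ explicitly as a $2\times 2$ matrix, substitute the closed form of $R_{a}^{n+1}$ from~(\ref{e4}), and extract the $(2,1)$-entry of the product $(I-R_{a})^{-1}(I-R_{a}^{n+1})$. Since the $(2,1)$-entry of $\sum_{i=0}^{n}R_{a}^{i}$ equals $\frac{1}{F_{k,a}}\sum_{i=0}^{n}F_{k,ai}$, clearing this factor and multiplying by $\det(I-R_{a})$ produces the stated formula.

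The main obstacle is bookkeeping: the entry of $(I-R_{a})^{-1}(I-R_{a}^{n+1})$ one reads off a priori involves $F_{k,an}$, $F_{k,a(n+1)}$ and $F_{k,a(n+2)}$, and one must invoke the recurrence $F_{k,a(n+2)}=L_{k,a}F_{k,a(n+1)}-(-1)^{a}F_{k,an}$ (i.e.\ the $(2,1)$-entry of (\ref{e5})) to collapse the expression into the compact combination of $F_{k,an}$, $F_{k,a(n+1)}$ and $F_{k,a}$ shown in the statement. A quicker alternative, if one prefers to bypass the matrix inversion, is to apply the same scalar recurrence entrywise and telescope: summing $F_{k,a(i+1)}=L_{k,a}F_{k,ai}-(-1)^{a}F_{k,a(i-1)}$ for $i=1,\ldots,n$ expresses three shifted copies of $S_{n}=\sum_{i=0}^{n}F_{k,ai}$ plus boundary terms, and solving the resulting linear equation for $S_{n}$ recovers the identity immediately.
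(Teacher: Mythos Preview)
Your proposal is correct and follows essentially the same strategy as the paper: sum the matrix geometric series, check that $\det(I-\text{matrix})=1+(-1)^{a}-L_{k,a}\neq 0$, invert, and read off the $(2,1)$-entry. The only difference is that the paper carries this out with $S_{a}$ rather than $R_{a}$ (introducing an auxiliary matrix $T_{a}$ in the inversion), but since both satisfy $T^{2}=L_{k,a}T-(-1)^{a}I$ and both have $(2,1)$-entry proportional to $F_{k,ai}$, the computations and the final simplification via the recurrence $F_{k,a(n+2)}=L_{k,a}F_{k,a(n+1)}-(-1)^{a}F_{k,an}$ are parallel.
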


\begin{proof}
It is known that $I-S_{a}^{n+1}=(I-S_{a})\sum_{i=0}^{n}S_{a}^{i}$. If $\det(I-S_{a})$ is nonzero, then we can write
\begin{equation}\label{e15}
(I-S_{a})^{-1}(I-S_{a}^{n+1})=\sum_{i=0}^{n}S_{a}^{i}=\frac{1}{2F_{k,a}}\left[ \begin{array}{lr} \sum_{i=0}^{n}\epsilon_{a}(i) & \Delta_{a} \sum_{i=0}^{n}F_{k,ai}\\ \sum_{i=0}^{n}F_{k,ai} & \sum_{i=0}^{n}\epsilon_{a}(i) \end{array}\right].
\end{equation}
where $\epsilon_{a}(i)=2F_{k,a(i+1)}-L_{k,a}F_{k,ai}$.

It is easy to see that,
\begin{equation}
\det(I-S_{a})=\left(1-\frac{1}{2}L_{k,a}\right)^{2}-\frac{1}{4}\Delta_{a}=1+(-1)^{a}-L_{k,a}
\end{equation}
is nonzero, because $a\geq 1$. If we take $\delta=1+(-1)^{a}-L_{k,a}$, then we get
\begin{equation}\label{e16}
(I-S_{a})^{-1}=\frac{1}{\delta}\left[ \begin{array}{lr} 1-\frac{1}{2}L_{k,a} & \frac{\Delta_{a}}{2} \\ \frac{1}{2} &  1-\frac{1}{2}L_{k,a}  \end{array}\right]=\frac{1}{\delta}\left[ \left(1-\frac{1}{2}L_{k,a}\right)I+\frac{1}{2}T_{a}\right],
\end{equation}
where $T_{a}=\left[ \begin{array}{lr} 0 & \Delta_{a} \\ 1 &  0  \end{array}\right]$.

Thus it is seen that,
\begin{align*}
(I-S_{a})^{-1}(I-S_{a}^{n+1})&=\frac{1}{\delta}\left[ \left(1-\frac{1}{2}L_{k,a}\right)I+\frac{1}{2}T_{a}\right](I-S_{a}^{n+1})\\
&=\frac{1}{\delta}\left[ \left(1-\frac{1}{2}L_{k,a}\right)(I-S_{a}^{n+1})+\frac{1}{2}T_{a}(I-S_{a}^{n+1})\right],
\end{align*}
where
$$T_{a}(I-S_{a}^{n+1})=\frac{1}{2F_{k,a}}\left[\begin{array}{lr}  -\Delta_{a}F_{k,a(n+1)} & \Delta_{a}(2F_{k,a}-\epsilon_{a}(n+1))\\ 2F_{k,a}-\epsilon_{a}(n+1) & -\Delta_{a}F_{k,a(n+1)} \end{array}\right].$$

Furthermore, from the identity (\ref{e15}), it follows that
\begin{align*}
\sum_{i=0}^{n}F_{k,ai}&=\frac{1}{\delta}\left(-\left(1-\frac{1}{2}L_{k,a}\right)F_{k,a(n+1)}+\frac{1}{2}(2F_{k,a}-\epsilon_{a}(n+1))\right)\\
&=\frac{1}{\delta}((-1)^{a}F_{k,an}+F_{k,a}-F_{k,a(n+1)}).
\end{align*}
\end{proof}

\begin{theorem}
Let $n\in \N$ and $a\in \Z$ with $a\geq 1$. Then,
\begin{equation}\label{e17}
\sum_{i=0}^{n}(-1)^{i}F_{k,ai}=\frac{(-1)^{a}F_{k,an}-F_{k,a}+F_{k,a(n+1)}}{1+(-1)^{a}+L_{k,a}}.
\end{equation}
\end{theorem}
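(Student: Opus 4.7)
The plan is to mimic the proof of the preceding theorem, replacing $S_a$ by $-S_a$ so as to pick up the alternating signs. The starting identity is
\begin{equation*}
(I+S_{a})\sum_{i=0}^{n}(-1)^{i}S_{a}^{i}=I-(-S_{a})^{n+1}=I+(-1)^{n}S_{a}^{n+1},
\end{equation*}
which is the geometric-sum telescoping for the matrix $-S_a$. Provided $I+S_a$ is invertible, this gives $\sum_{i=0}^{n}(-1)^{i}S_{a}^{i}=(I+S_{a})^{-1}[I+(-1)^{n}S_{a}^{n+1}]$, and the $(2,1)$ entry of the left-hand side is $\frac{1}{2F_{k,a}}\sum_{i=0}^{n}(-1)^{i}F_{k,ai}$ by formula (\ref{e7}).

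Next I would verify invertibility by computing, exactly as in the previous theorem,
\begin{equation*}
\det(I+S_{a})=\Bigl(1+\tfrac{1}{2}L_{k,a}\Bigr)^{2}-\tfrac{1}{4}\Delta_{a}=1+(-1)^{a}+L_{k,a},
\end{equation*}
which is nonzero for $a\geq 1$. Denoting this quantity by $\delta'$, the cofactor formula yields
\begin{equation*}
(I+S_{a})^{-1}=\frac{1}{\delta'}\left[\Bigl(1+\tfrac{1}{2}L_{k,a}\Bigr)I-\tfrac{1}{2}T_{a}\right],
\end{equation*}
with the same auxiliary matrix $T_{a}=\left[\begin{array}{lr}0&\Delta_a\\ 1&0\end{array}\right]$ as in (\ref{e16}).

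The remaining step is to multiply out $(I+S_{a})^{-1}[I+(-1)^{n}S_{a}^{n+1}]$, read off the $(2,1)$ entry, and simplify. The $(2,1)$ entry of $(I+S_{a})^{-1}$ is $-\tfrac{1}{2\delta'}$, while for the $(2,1)$ entry of $(I+S_{a})^{-1}S_{a}^{n+1}$ one needs the $(2,1)$ entries of both $S_a^{n+1}$ (which is $F_{k,a(n+1)}/(2F_{k,a})$) and $T_aS_a^{n+1}$ (which works out to $\epsilon_{a}(n+1)/(2F_{k,a})$ using the definition of $T_a$ and formula (\ref{e7})). Substituting $\epsilon_a(n+1)=2F_{k,a(n+2)}-L_{k,a}F_{k,a(n+1)}$ and applying the second-order recursion $F_{k,a(n+2)}=L_{k,a}F_{k,a(n+1)}-(-1)^{a}F_{k,an}$ that the subsequence $\{F_{k,am}\}_{m}$ satisfies will collapse the combination into $(-1)^{a}F_{k,an}+F_{k,a(n+1)}$, which combined with the $-F_{k,a}$ coming from the $I$-term delivers the claimed formula after multiplying by $2F_{k,a}$.

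The main obstacle is purely bookkeeping: tracking the signs $(-1)^{n}$, $(-1)^{a}$, and $(-1)^{n+1}$ through the four cross-terms of the product and correctly applying the $(n+2)$-step recursion so that $\epsilon_{a}(n+1)$ combines cleanly with $(1+\tfrac{1}{2}L_{k,a})F_{k,a(n+1)}$. No new ideas are required beyond those used in the previous theorem; the only real substitutions are the change $I-S_{a}\mapsto I+S_{a}$ in the matrix inversion and the change $I-S_{a}^{n+1}\mapsto I+(-1)^{n}S_{a}^{n+1}$ on the right.
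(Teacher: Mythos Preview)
Your approach is essentially the paper's: the same computation of $\det(I+S_a)=1+(-1)^a+L_{k,a}$, the same inverse written via the auxiliary matrix $T_a$, and the same collapse of the bracket using $F_{k,a(n+2)}=L_{k,a}F_{k,a(n+1)}-(-1)^{a}F_{k,an}$. The one substantive difference is that the paper splits on the parity of $n$. For $n$ even it uses exactly your telescoping identity with $(-1)^n=1$, namely $I+S_a^{n+1}=(I+S_a)\sum_{i=0}^{n}(-1)^iS_a^i$, and then carries out the same matrix algebra you outline; for $n$ odd it does not repeat the matrix computation but instead reduces to the even case via $\sum_{i=0}^{n}(-1)^iF_{k,ai}=\sum_{i=0}^{n-1}(-1)^iF_{k,ai}-F_{k,an}$ and invokes the already-established even formula at $n-1$. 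In your uniform treatment the factor $(-1)^n$ remains attached to the combination $(-1)^aF_{k,an}+F_{k,a(n+1)}$ after simplification, so to reconcile with the numerator exactly as stated you would still have to treat the odd case separately; the paper's case split is precisely what handles this sign.
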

\begin{proof}
We prove the theorem in two phases, by taking n as an even and odd natural number. Firstly assume that n is an even natural number. Then,
$$I+S_{a}^{n+1}=(I+S_{a})\sum_{i=0}^{n}(-1)^{i}S_{a}^{i}.$$ If $\det(I+S_{a})$ is nonzero, then we can write
\begin{equation}\label{e18}
(I+S_{a})^{-1}(I+S_{a}^{n+1})=\sum_{i=0}^{n}S_{a}^{i}=\frac{1}{2F_{k,a}}\left[ \begin{array}{lr} \sum_{i=0}^{n}(-1)^{i}\epsilon_{a}(i) & \Delta_{a} \sum_{i=0}^{n}(-1)^{i}F_{k,ai}\\ \sum_{i=0}^{n}(-1)^{i}F_{k,ai} & \sum_{i=0}^{n}(-1)^{i}\epsilon_{a}(i) \end{array}\right].
\end{equation}
where $\epsilon_{a}(i)=2F_{k,a(i+1)}-L_{k,a}F_{k,ai}$.

It is easy to see that,
\begin{equation}
\det(I+S_{a})=\left(1+\frac{1}{2}L_{k,a}\right)^{2}-\frac{1}{4}\Delta_{a}=1+(-1)^{a}+L_{k,a}
\end{equation}
is nonzero. If we take $\delta=1+(-1)^{a}+L_{k,a}$, then we get
\begin{equation}\label{e19}
(I+S_{a})^{-1}=\frac{1}{\delta}\left[ \begin{array}{lr} 1+\frac{1}{2}L_{k,a} & -\frac{\Delta_{a}}{2} \\ -\frac{1}{2} &  1+\frac{1}{2}L_{k,a}  \end{array}\right]=\frac{1}{\delta}\left[ \left(1+\frac{1}{2}L_{k,a}\right)I-\frac{1}{2}T_{a}\right],
\end{equation}
where $T_{a}=\left[ \begin{array}{lr} 0 & \Delta_{a} \\ 1 &  0  \end{array}\right]$.

Thus it is seen that,
\begin{align*}
(I+S_{a})^{-1}(I+S_{a}^{n+1})&=\frac{1}{\delta}\left[ \left(1+\frac{1}{2}L_{k,a}\right)I-\frac{1}{2}T_{a}\right](I+S_{a}^{n+1})\\
&=\frac{1}{\delta}\left[ \left(1+\frac{1}{2}L_{k,a}\right)(I+S_{a}^{n+1})-\frac{1}{2}T_{a}(I+S_{a}^{n+1})\right],
\end{align*}
where
$$T_{a}(I+S_{a}^{n+1})=\frac{1}{2F_{k,a}}\left[\begin{array}{lr}  \Delta_{a}F_{k,a(n+1)} & \Delta_{a}(2F_{k,a}+\epsilon_{a}(n+1))\\ 2F_{k,a}+\epsilon_{a}(n+1) & \Delta_{a}F_{k,a(n+1)} \end{array}\right].$$

Furthermore, from the identity (\ref{e15}), it follows that
\begin{align*}
\sum_{i=0}^{n}(-1)^{i}F_{k,ai}&=\frac{1}{\delta}\left(\left(1+\frac{1}{2}L_{k,a}\right)F_{k,a(n+1)}+\frac{1}{2}(2F_{k,a}+\epsilon_{a}(n+1))\right)\\
&=\frac{1}{\delta}((-1)^{a}F_{k,an}-F_{k,a}+F_{k,a(n+1)}).
\end{align*}

Now assume that $n$ is an odd natural number. Hence we get,
\begin{equation}\label{e20}
\sum_{i=0}^{n}(-1)^{i}F_{k,ai}=\sum_{i=0}^{n-1}(-1)^{i}F_{k,ai}-F_{k,an}
\end{equation}

Since $n$ is an odd natural number, then $n-1$ is even. Thus taking $n-1$ in (\ref{e17}) and using it in (\ref{e20}), the proof is completed.
\end{proof}

\medskip
\end{document}